\documentclass[a4paper,12pt]{amsart}
\usepackage[utf8]{inputenc}
\usepackage{mathtools}

\makeatletter
\@namedef{subjclassname@2020}{\textup{2020} Mathematics Subject Classification}
\makeatother

\allowdisplaybreaks[1]

\usepackage{amsmath,amsthm,amssymb}
\usepackage{hyperref}
\usepackage{amsfonts}
\usepackage{amsmath}
\usepackage{mathrsfs}
\usepackage{tabto}
\usepackage{changepage} %For adjustwidth
\usepackage{fullpage} %To decrease margins
\usepackage{graphicx}
\usepackage{enumitem} %For itemising

\newcommand{\C}{\mathbb{C}}

\newcommand{\N}{\mathbb{N}}

\numberwithin{equation}{section}
\newtheorem{theorem}{Theorem}[section]
\newtheorem{lemma}[theorem]{Lemma}

\newtheorem{remark}[theorem]{Remark}
\newtheorem{example}[theorem]{Example}
\newtheorem{definition}[theorem]{Definition}
\thanks{The research work of the first author is supported by Institute Research Fellowship of BIT Mesra APO/2024-25/49 and the research work of the second author is supported by ANRF(SERB) research grant TAR/2023/000197 }

\begin{document}
	%On Postsingular dynamics of commuting transcendental entire function.
	\title [Postsingular sets of semigroup]{Results on the postsingular set of compositions of transcendental entire functions}
%	On the Dynamics of Commuting Transcendental Entire Functions: Postsingular Behavior and Eremenko’s Conjecture\\
%	On the Dynamics of Commuting Transcendental Entire Functions: Equality of Dynamical Sets, Postsingular Behavior and Eremenko’s Conjecture
	
	\author[M. Kumari]{Manisha Kumari}
	\address{Department of Mathematics, Birla Institute of Technology Mesra
		Ranchi--835 215, India}
	
	\email{phdam10052.24@bitmesra.ac.in}
	
	\author[D. Kumar]{Dinesh Kumar}
	\address{Department of Mathematics, Birla Institute of Technology Mesra
		Ranchi--835 215, India}
	
	\email{dineshkumar@bitmesra.ac.in }
	
	%\begin{frontmatter}
	%\title{On dynamics of the bungee set and the  filled julia set of a  transcendental semigroup}
	%\author[1]{Manisha Kumari }\ead{phdam10052.25@bitmesra.ac.in}
	%\author[1]{Dinesh Kumar\corref{cor1}}\ead{dineshkumar@bitmesra.ac.in}
	%\cortext[cor1]{Corresponding author}
	%\address[1]{Department of Mathematics, Birla Institute of  Technology  Mesra, Ranchi-835215,  India}
	%% \journal{Journal of Computational Science }
	%\end{frontmatter}

	\keywords{ Escaping set, Fatou set, normal family, postsingularly bounded, postsingularly finite, hyperbolic set.}
	
	\begin{abstract}
	
	In this paper, we study the dynamics of commuting transcendental entire functions $f$ and $g$, where $g=af^p+b$ with $a,b\in\C$, $p\in\N$, and $a\neq 0,1$. We examine how singular values and postsingular sets behave under composition. Within this framework, we show that if one of the functions is postsingularly finite (respectively, postsingularly bounded, hyperbolic), then the other function also has this property, and so do their compositions. As an application, we derive several results concerning transcendental semigroups, including situations in which the Eremenko’s conjecture is satisfied.
%		\keywords{ Escaping set, Fatou set, normal family, postsingularly bounded, postsingularly finite, hyperbolic set.}
%		In this paper, we study the dynamics of commuting transcendental entire functions $f$ and $g$ where $g$ has the form $af^p+b,\,  \, a,b \in \C,\, p\in \N$ and $a\neq 0,1$. We have shown that their escaping set, filled Julia set, and the bungee set are identical. As a special case we obtained that the Julia sets of $f$ and $g$ are also identical. In fact, our result generalizes the results of K.K. Poon and Yang, from 1998.
%		%		
%		%	We study the dynamics of commuting transcendental entire functions. Under certain specific hypothesis, we show that their escaping set, filled Julia set, and the bungee set are identical.
%		We also examine how the singular values and postsingular sets behave under the composition of functions. Within the same framework, we prove that if one of the functions is postsingularly bounded (respectively postsingularly finite), then the other corresponding function also exhibits analogous dynamical characteristics. Similar dynamical characteristics are also exhibited by their compositions. As an application, we derive several results on transcendental semigroups, including particular instances in which Eremenko’s conjecture is satisfied.
	\end{abstract}
	
	%	\keywords{bungee set, filled Julia set, escaping set, completely invariant, asymptotic value, transcendental semigroup}

	\subjclass[2020]{37F10, 30D05}
	
	\maketitle
	\section{Introduction}

	Let $f:\C\to\mathbb{C}$ be a transcendental entire function and let
$f^{n}$ denote the $n$-th iterates of $f$ for all $n\in\mathbb{N}$. Classically, by dividing the complex plane into two basic sets the Julia set and the Fatou set, the dynamical behavior of $f$ has been understood \cite{baker1984wandering}. The Fatou set of $f$ is represented by $F(f)$ and is defined as follows $F(f)=\{z\in\mathbb{C} :\{f^{n}(z)\}_{n\in\mathbb{N}}\, \text{is normal in some neighborhood of} \, z\}$ and the Julia set denoted by $J(f)$ is its complement, that is, $J(f)=\mathbb{C}\setminus F(f)$. In contrast to the Julia set, points in the Fatou set show steady dynamical behavior. The Julia set exhibits sensitive dependence on initial conditions and the dynamical behavior on the Julia set is highly chaotic. The fundamental characteristics and structural elements of these set are well recorded in the literature \cite{bergweiler1993iteration}. In addition to the complex plane's traditional breakdown into Fatou and Julia sets,  points can also be further categorized based on their orbits under iteration. $I(f)$ represents the escaping set of $f$ and is defined by $I(f)=\{z\in\mathbb{C}: f^{n}(z)\to\infty \text{ as } n\to\infty\}$.
It was introduced for the first time by Eremenko \cite{eremenko1989iteration}, and is made up of points whose forward orbit escapes to infinity. Additionally, he demonstrated that every connected component of $\overline{I(f)}$ is unbounded and that $I(f)$ is non-empty. $K(f)$ is the filled Julia set of $f$ and is defined as $K(f)=\{z\in\mathbb{C}:\{f^{n}(z)\}_{n\in\mathbb{N}} \text{ is bounded}\}.$
In other words, it involves points with bounded orbits. Furthermore, the bungee set of $f$ is represented by $BU(f)$ and is defined to be the set of points whose orbit exhibits a mixed dynamical behavior, that is, the orbit admits at least two subsequences, one of which tends to infinity while the other stays bounded.
	
	Two functions $f$ and $g$ are said to be commuting (permutable) if
	$f \circ g = g \circ f$. In \cite{kumar2013dynamics}, the second author examined how the Fatou set and singular	values of transcendental entire functions $f$, $g$ and their composition $f \circ g$ are related. They obtained a variety of conditions ensuring that the Fatou set of $f$ and $f \circ g$ coincide and further analyzed the relationship between the singular values of $f$ and $g$ and those arising from their composition. Eremenko's conjecture \cite{eremenko1989iteration} that each component of $I(f)$ is unbounded has served as a major inspiration for most of the work on $I(f)$. It saw significant advancement in \cite{rippon2005questions}, where it was demonstrated that $I(f)$ always has at least one unbounded component. 
Singular values mostly govern the dynamics of an entire function. We now look at the key definitions of singular values and postsingular sets, which are essential to understand the dynamics of transcendental entire functions. The set of singular values of $f$, represented by $\mathrm{Sing}(f^{-1})$, consist of all asymptotic and critical values of $f$ along with their finite limit points. The postsingular set of $f$ is denoted as $P(f)$ and is defined as the closure of the forward orbit of the singular values of $f$ under iteration, that is,\[P(f)=\overline{\bigcup_{n \geq 0} f^n(\mathrm{Sing}(f^{-1}))}.\] If $P(f)$ is finite (respectively, bounded) then the entire function $f$ is said to be postsingularly finite (respectively, postsingularly bounded).
%The function $f$ is called postsingularly finite if $P(f)$ is finite
 The entire function $f$ is called hyperbolic if its postsingular set $P(f)$ is a compact subset of the Fatou set $F(f)$. Recall that a transcendental semigroup $H$ is a semigroup generated by a family of transcendental entire functions $\{h_1,h_2,\dots \}$, where the semigroup operation is the composition of functions. Semigroup $H$ is said to be finitely generated if there are only finitely many generators. For a transcendental semigroup $H$, the Fatou set $F(H)$ is defined as the maximal open subset of $\C$ on which the family of functions in $H$ is normal. The Julia set $J(H)$ is then defined as the complement of $F(H)$. The escaping set of the transcendental semigroup $H$, denoted by $I(H)$, is given by
 $I(H) = \{ z \in \C \mid \text{for every sequence in } H, \text{ there exists a subsequence that diverges to infinity at } z \}$, \cite{kumar2016dynamics}.\\
	% We have shown that Eremenko's conjecture is true under certain condition on the transcendental semigroup.\\
%	Motivated by these developments, we demonstrate that if $f$ is postsingularly bounded (respectively postsingularly finite, hyperbolic), then so are $g$ and $f \circ g,$ where $g$ is given by $af^p+b, \, a, b \in \C, p \in \N$ and $a\neq 0,1$. In connection with Eremenko's conjecture, we additionally provide a topological analysis of the class of functions discussed above.\\
%	In this paper, we have considered two commuting transcendental entire function $f$ and $g$ where $g$ is given by $af^p+b, \, a, b \in \C, p \in \N$ and $a\neq 0,1$. Using the concept of postsingular set. We have provided the condition under which $g$ and $f\circ g$ are postsingularly bounded (respectively postsingularly finite, hyperbolic) whenever $f$ is postsingularly bounded (respectively postsingularly finite, hyperbolic). In addition we discussed some topological questions related to the semigroup $H$ generated by $f$ and $g$ are postsingularly bounded (respectively postsingularly finite, hyperbolic). Moreover, we have shown that Eremenko's conjecture holds in this situations.\\
	In this paper, we have explored a pair of commuting transcendental entire functions $f$ and $g$, where $g$ has the form $g = af^p + b$ with $a, b \in \C$, $p \in \N$, and $a \neq 0,1$. In addition $|a|=1$. We have established conditions under which $g$ and $f \circ g$ are postsingularly finite (respectively, postsingularly bounded, hyperbolic) whenever $f$ is so. Furthermore, we have  addressed several topological properties of the finitely generated abelian transcendental semigroup $H=[f,g]$, where the generators are postsingularly finite (respectively, postsingularly bounded, hyperbolic). Finally, we have shown that Eremenko’s conjecture holds in this setting.\\
		\textbf{Structure of the paper :} We provide some known definitions and basic facts in section \refeq{sec 2}, which is relevant to our study, including complete invariance and singular values of transcendental semigroup. In section \refeq{sec4}, we have established the equality of the Fatou set and the escaping set of the semigroup $H=[f,g]$ with the Fatou set and the escaping set of its generators. We have also examined that if one of the generator is postsingularly finite (respectively, postsingularly bounded, hyperbolic) then so is the other. Finally, in the last section \refeq{sec5}, we have considered some topological questions about the escaping set of such abelian semigroup $H$ generated by $f$ and $g$. Moreover, using the notion of the postsingular set of a transcendental semigroup we have obtained a partial solution to the Eremenko's conjecture. 
	\section{Preliminaries} \label{sec 2}
	In this section, we recall some important notions of transcendental semigroup theory. A transcendental semigroup $H$ is represented by $H=[h_1, h_2, \dots ]$ and is a semigroup generated by a set of transcendental entire functions $\{h_1,h_2,\dots\}$. The semigroup is closed under functional composition and each element $h \in H$ is a transcendental entire function derived from the finite composition of its generators. 
	%\\
	%A family of transcendental entire functions $\{h_1, h_2,...\}$ are able to generate a transcendental semigroup $H$, where functional composition works as the semigroup operation. The semigroup is represented by $H = [h_1, h_2,...,]$.$ G$ is therefore closed under composition and any $h\in H$ is a transcendental entire function.
	%The iteration theory of entire functions is introduced in \cite{bergweiler1993iteration}.
	Hinkkanen and Martin \cite{hinkkanen1996dynamics}, provided major contribution in this direction. If the generators of $H$ commute with each other, then $H$ is called an abelian
	transcendental semigroup.\\
	The following are some established definitions in transcendental semigroup theory.

	\begin{definition}[Forward and backward invariance]
		Let $H$ be a transcendental semigroup. A set $W \subset \mathbb{C}$ is
		said to be \emph{forward invariant} under $H$ if
		\[
		h(W) \subset W \quad \text{for all } h \in H.
		\]
		The set $W$ is  \emph{backward invariant} under $H$ if
		\[
		h^{-1}(W)=\{ w \in \mathbb{C} : h(w) \in W \} \subset W
		\quad \text{for all } h \in H.
		\]
		If $W$ is both forward and backward invariant under $H$, then $W$ is
		said to be \emph{completely invariant} under $H$ \cite{kumar2016dynamics}.
	\end{definition}
	
	%Recall that a point $w \in \mathbb{C}$ is called a \emph{critical value}
	%of a transcendental entire function $f$ if there exists a point
	%$w_0 \in \mathbb{C}$ such that $f(w_0)=w$ and $f'(w_0)=0$. In this case,
	%$w_0$ is referred to as a \emph{critical point} of $f$. Thus, the image
	%of any critical point under $f$ is a critical value of $f$ \cite{}.
	If there is a point $w_0 \in \mathbb{C}$ such that $f(w_0)=w$ and $f'(w_0)=0$, then $w \in \mathbb{C}$ is referred to as a \emph{critical value} of $f$ and $w_0$ is called  \emph{critical point} of $f$. Therefore, a critical value of $f$ is the image of a critical point under $f$.
	
	Moreover, if there is a curve $\Gamma$ tending to infinity such that $f(z) \to \zeta$ as $z \to \infty$ along $\Gamma$, then $\zeta \in \mathbb{C}$ is considered as an \emph{asymptotic value} of $f$. The corresponding concepts for transcendental semigroups were presented in \cite{kumar2015dynamics}.
	
	\begin{definition}
		A point $z_0 \in \mathbb{C}$ is called a \emph{critical point} of a
		transcendental semigroup $H$ if it is a critical point of at least one
		function $h \in H$. A point $w \in \mathbb{C}$ is called a
		\emph{critical value} of $H$ if it is a critical value of some
		$h \in H$.
		% \cite{kumar2015dynamics}
	\end{definition}
	
	\begin{definition}
		A point $w \in \mathbb{C}$ is called an \emph{asymptotic value} of a
		transcendental semigroup $H$ if it is an asymptotic value of at least one
		function $h \in H$.
		%\cite{kumar2015dynamics}.
	\end{definition}
There are several significant categories of transcendental semigroups for which $I(H)$ is non-empty. We now present an example of a transcendental semigroup $H$ whose escaping set $I(H)$ is non-empty.     
	\begin{example}\label{ex 1}
		%
		%Let $f=\lambda Sinz, \lambda \in \mathbb{C}\setminus {0}$ and $ g = af + b$, where $a,b \in \mathbb{C}, \, |a|=1$ and $a\neq 1$. Let $H=[f, g]$ be a transcendental semigroup.
		% we obtain for any $n \in \mathbb{N}$,
		%\[
		%g^n(z) = a^n f^n(z) + b \sum_{k=0}^{n-1} a^k.
		%\]
		%so, $I(f)=I(g)$.
		%One can seen that for $l,m,n,p \in \mathbb{N}$, $f^l\circ g^m= f^l(a^m f^m +b \sum_{k=0}^{m-1} a^k)$ and $g^n\circ f^p= a^n f^{n+p} +b \sum_{k=0}^{n-1} a^k $. As a result any $h\in H $ is either $h=f^s$ for some $s\in \mathbb{N}$ or $h=a^nf^n +b(1+a+a^2+\dots a^{n-1})= g^n$ for some $n\in \mathbb{N}$. In both scenario $I(h)=I(f)$. Hence we conclude that $I(H)=I(f)\neq \emptyset$. \\ 
		Let $f=\lambda \, \sin z$ with $\lambda \in \mathbb{R}\setminus \{0\}$, and $g(z) = af^p(z) +b$ where, $ a,b \in \mathbb{C},\,  p\in \mathbb{N} $ and $a \neq 0,1$. Consider the abelian transcendental semigroup $H=[f,g]$. It can be seen that for any $n \in \mathbb{N}$, 
		\[
		g^n(z) = a^n f^{np}(z) + b \sum_{k=0}^{n-1} a^k.
		\]
	As $|a|=1$, $a\neq 1$, and the term $b \sum_{k=0}^{n-1} a^k$ is bounded, the growth of $g^n$ and $f^{np}$ are equivalent. Therefore, $I(f)=I(g)$. Now, for $l,m,n,q \in \mathbb{N}$, we obtain 
		%$f^l\circ g^m = f^l\big(a^m f^{mp} + b \sum_{j=0}^{m-1} a^j\big) \implies a^m f^{mp+l} + b \sum_{j=0}^{m-1} a^j$
		$f^l\circ g^m = a^m f^{mp+l} + b \sum_{j=0}^{m-1} a^j$
		and
		$
		g^n\circ f^q = a^n f^{np+q} + b \sum_{k=0}^{n-1} a^k.
		$
		%where $ \, c_n=b \sum_{k=0}^{n-1} a^k$.
		%Consequently, every $h \in H$ is of the form $h(z)= a^n f^{np+m}(z) + c_n$, where $ \, c_n=b \sum_{k=0}^{n-1} a^k$.
		%
		%We have $I(f)=I(g)$, since $g^n$ and $f^n$ have the same growth. For $n,r \in \mathbb{N}$,
		%\[
		%g^n\circ f^r = a^n f^{np+r} + c_n,
		%\]
		%where $c_n = b \sum_{k=0}^{n-1} a^k$.
		Thus, every $h \in H$ has the form $h = a^n f^{np+r} + c_n$, where $c_n = b \sum_{k=0}^{n-1} a^k$ and for some $r\in \N$. We have either
		$h = f^s$ for some $s \in \mathbb{N}$, or
		$h = a^n f^{np}+ b \sum_{k=0}^{n-1} a^k = g^n$
		for some $n \in \mathbb{N}$. This implies that $I(f)= I(h)$. Therefore, $I(H)=I(f)\neq \emptyset$.
	\end{example}
	\section{Postsingular behavior under commuting entire functions}\label{sec4}
	
	% we demonstrate that for a particular class of functions, if one of them is postsingularly bounded (respectively postsingularly finite and hyperbolic), then the other related functions and their compositions are also postsingularly bounded (respectively postsingularly finite and hyperbolic). Also,
 In the present section, we show that the escaping set and the Fatou set of the semigroup coincide with those of its generators. The transcendental semigroup $H$ is generated by $f$ and $g$, where $g$ is given by $af^p(z) + b$ with $a,b \in \mathbb{C}$, $p \in \mathbb{N}$, and $a \neq 0,1$. We first establish a lemma which gives relation between $f, g$ and the affine map $P(z)=az+b, \, 0\neq a,b \in \C$.
	\begin{lemma}\label{l2}
		Suppose $f$ and $g$ are two commuting transcendental entire functions where $g=af^p+b,\, a,b\in \mathbb{C}$, $p\in \mathbb{N}$, and $a \neq 0,1$. Then, $f$ commutes with $az+b,$ i.e.
		\[
		f(az+b)=af(z)+b.
		\]
	\end{lemma}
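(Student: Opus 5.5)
The plan is to read off the functional equation directly from the commutation relation, evaluated on the range of $f^p$, and then extend it to all of $\C$ by the identity theorem. Throughout, $f^p$ denotes the $p$-th iterate of $f$, consistent with the computation $g^n=a^nf^{np}+b\sum_{k=0}^{n-1}a^k$ in Example~\ref{ex 1}.

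First I will expand both sides of $f\circ g=g\circ f$. On one side,
\[
f(g(z))=f\bigl(af^p(z)+b\bigr).
\]
On the other side,
\[
g(f(z))=af^p\bigl(f(z)\bigr)+b=af^{p+1}(z)+b=a\,f\bigl(f^p(z)\bigr)+b .
\]
Setting $w=f^p(z)$ and equating the two expressions gives
\[
f(aw+b)=af(w)+b \quad \text{for every } w\in f^p(\C).
\]

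Next I will pass from the range of $f^p$ to all of $\C$. Consider the entire function
\[
\Phi(w)=f(aw+b)-af(w)-b .
\]
By the previous step, $\Phi$ vanishes on $f^p(\C)$. Since $f$ is transcendental entire, so is $f^p$. By Picard's little theorem, $f^p(\C)$ is $\C$ minus at most one point; at the very least it is a nonempty open set by the open mapping theorem. Hence $\Phi$ vanishes on a set with accumulation points, and the identity theorem gives $\Phi\equiv 0$. This is exactly $f(az+b)=af(z)+b$ for all $z\in\C$. The hypotheses $a\neq 0,1$ are not needed for this argument; they matter only later.

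The only delicate point is the interpretation of $f^p$. The argument needs $f^p\circ f=f\circ f^p$, which holds for iterates. If $f^p$ meant the $p$-th power $(f(z))^p$, then $g\circ f=a\,(f(f(z)))^p+b$ could not be rewritten as an expression in $w=f(z)^p$ in the same way, and the claim would need a different argument. I would therefore state explicitly, following Example~\ref{ex 1}, that iterates are meant, after which the proof is just the two steps above.
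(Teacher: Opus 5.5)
Your proof is correct and is essentially the paper's argument: expand $f\circ g=g\circ f$, use $f^p\circ f=f\circ f^p$, and substitute $z=f^p(w)$. You also add the identity-theorem step that extends the equation from $f^p(\C)$ to all of $\C$, which the paper leaves implicit, so your version is slightly more complete.
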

	
	\begin{proof}
		We have $f\circ g(w)=g\circ f(w)$.
		As $g(w)=af^p(w)+b$, we obtain
		\begin{align*}
			f\circ g(w)
			&= f(af^p(w)+b).
		\end{align*}
		On the other hand, we have $g\circ f(w)=af^p(f(w))+b$. Substituting $f^p(w)=z$, this leads to the following functional equation $f(az+b)=af(z)+b$.
	\end{proof} 

	\begin{theorem}\label {Th 1}
		Suppose $f$ and $g$ are commuting transcendental entire functions, where $g(z) = af^p(z) + b, \, a,b \in \mathbb{C}, \, p\in \mathbb{N}$ and $a \neq 0,1$.  
		Consider the abelian transcendental semigroup $H = [f,g]$. Then, for every $h \in H$, we have
		\[
		F(H) = F(h) \quad \text{and} \quad I(H) = I(h).
		\]
	\end{theorem}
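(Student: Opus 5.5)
Let $T(z)=az+b$. By Lemma \ref{l2}, $f\circ T=T\circ f$, hence by induction $f^k\circ T^m=T^m\circ f^k$ for all $k,m\ge 0$. This lets me write every element of $H$ in a normal form. Since $g=T\circ f^p$ and $T$ commutes with $f$, any word in $f$ and $g$ containing $m$ copies of $g$ and $l$ copies of $f$ equals
\[
h=T^m\circ f^{mp+l},\qquad T^m(z)=a^m z+b\sum_{k=0}^{m-1}a^k .
\]
Put $s=mp+l\ge 1$. Because $T^m$ commutes with $f$, the iterates of $h$ are $h^n=T^{mn}\circ f^{sn}$. The argument then compares the family $\{h^n\}$, and the whole family $H$, with $\{f^k\}$, exploiting that the affine factors $T^{j}$ form a controlled family. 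Here I take $|a|=1$ as assumed in the introduction, so that the $T^j$ have unimodular slope and translation part $b\sum_{k<j}a^k$ bounded by $2|b|/|1-a|$. Consequently $|T^j(w)-w'|$ is uniformly comparable, in the sense that $\bigl||T^j(w)|-|w|\bigr|\le C$ for a constant $C$ independent of $j$.

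\emph{Escaping sets.} I will first show $I(h)=I(f)$ for every $h\in H$. Since $|h^n(z)|\ge |f^{sn}(z)|-C$, if $f^k(z)\to\infty$ then $h^n(z)\to\infty$. Conversely, suppose $h^n(z)\to\infty$; then $f^{sn}(z)\to\infty$. For the intermediate indices, $f^{sn+r}(z)=f^r(f^{sn}(z))$ with $0\le r<s$, and I need $f^r(w)\to\infty$ along the orbit. This is the delicate point. I would use the standard fact $I(f)=I(f^s)$ for entire $f$, which holds because $f^r(w_j)$ bounded along a subsequence would force $f^{sn}$ bounded along a subsequence. More directly, $I(f^s)$ is completely invariant under $f$, so $f^r(z)\in I(f^s)$ and thus $z\in I(f)$.

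For $I(H)$, note that any sequence $h_j=T^{m_j}\circ f^{s_j}$ in $H$ with $s_j\to\infty$ satisfies $|h_j(z)|\ge|f^{s_j}(z)|-C\to\infty$ for $z\in I(f)$. Sequences with bounded $s_j$ have bounded $m_j$ and so take only finitely many values; I will read the definition of $I(H)$ as concerning sequences of distinct elements, or sequences with $s_j\to\infty$, as in \cite{kumar2016dynamics}. This gives $I(f)\subset I(H)$, and $I(H)\subset I(f)$ follows by testing the particular sequence $f^n$.

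\emph{Fatou sets.} First I show $F(f)=F(h)$. Since $F(f)=F(f^s)$ is classical, it suffices to show $F(h)=F(f^s)$ with $h=T^m\circ f^s$. The family $\{T^{j}\}_{j\ge0}$ with $|a|=1$ is a normal family of isometries-plus-bounded-translations, and each of its limit functions is again an affine map with unimodular slope. If $f^{sn}$ is normal near $z_0$, then given a subsequence I pass to a further subsequence on which both $T^{mn}\to A$ locally uniformly and $f^{sn}\to\phi$ (possibly $\infty$), so that $h^n\to A\circ\phi$. Conversely, $f^{sn}=T^{-mn}\circ h^n$ and $\{T^{-j}\}$ is also normal, so the same argument applies. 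Hence $F(h)=F(f)$ for all $h\in H$. For the semigroup itself, $F(H)\subset F(f)$ is trivial since $f\in H$. For $F(f)\subset F(H)$, any sequence $T^{m_j}\circ f^{s_j}$ has a subsequence along which $T^{m_j}$ converges to an affine map (if $m_j$ is bounded, it takes finitely many values) and $f^{s_j}$ converges on $F(f)$, or $s_j$ is eventually constant. Normality of $H$ then follows.

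The main obstacle is the dependence on $|a|=1$. Without it, $T^j$ is not normal at $\infty$ in the required uniform way, and the comparisons above break down. I will state explicitly that this standing hypothesis from the introduction is used.
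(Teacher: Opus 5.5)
Your proposal is correct and follows the paper's route: the normal form $h=T^m\circ f^{mp+l}$, boundedness of the translation parts from $|a|=1$ (which the paper's proof also uses), and comparison of $h^n=T^{mn}\circ f^{sn}$ with $f^{sn}$; along the way you fill in steps the paper only asserts, namely $I(f^s)=I(f)$, the normality argument for $F(f)\subset F(H)$, and a workable reading of $I(H)$ via sequences of distinct elements (for $I(H)\subset I(f)$ you should test every subsequence of $(f^n)$, not just $(f^n)$ itself). Also, $|a|=1$ need not be imported from the introduction: conjugating $T$ to $w\mapsto aw$ turns Lemma \ref{l2} into $c_k(a^k-a)=0$ for the Taylor coefficients of the conjugated $f$, so transcendence forces $a$ to be a root of unity, hence $T$ has finite order and $\{T^j\}$ is a finite family.
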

	\begin{proof}
		%As given $g=af+b$ then $g^n(z) = a^n f^n(z) + b \sum_{k=0}^{n-1} a^k$  for $n\in \mathbb{N}$ so, $F(f)=F(g)$ and $I(f)=I(g)$. From example \ref{ex 1} we have $g^n\circ f^p = a^n f^{n+p} + b \sum_{k=0}^{n-1} a^k$. Therefore any $h\in H$ is of the form $h= g^n\circ f^p = a^n f^{n+p} + b \sum_{k=0}^{n-1} a^k$ for some $n,p \in \mathbb{N}$. Thus $I(h)=I(f^{n+p})=I(f)$ for all $h\in H$ on similar line we have $F(h)=F(f^{n+p})=F(f)$ for all $h\in H$. Hence, $I(H)=I(f)$ and $F(H)=F(f)$ for all $h\in H$. 
		
		% $g = af^p + b$ gives
		%\[
		%g^n(z) = a^n f^{np}(z) + c_n \quad \text{for } n \in \mathbb{N},
		%\]
		% where, $c_n= b \sum_{k=0}^{n-1}a^k$. The iterates of $f^{np}$ and $g^n$ are comparable, since $|a|=1$ then the sequence $c_n$ is bounded.
		%It follows that $F(f) = F(g)$ and $I(f) = I(g)$. From Example \ref{ex 1} we know that $g^n \circ f^m(z) = a^n f^{np+m}(z) + c_n$. Therefore, every $h \in H$ can be written as $h = g^n \circ f^m(z) = a^n f^{np+m}(z) + c_n$,
		%for some $n, p \in \mathbb{N}$. Consequently,
		%\[
		%I(h) = I(f^{\,np+m}) = I(f) \quad \text{for all } h \in H,
		%\]
		%and similarly,
		%\[
		%F(h) = F(f^{\,np+m}) = F(f) \quad \text{for all } h \in H.
		%\]
		%Thus we conclude that $F(H) = F(h)$ and $I(H) = I(h)$.
		
	By repeatedly computing the iterates of $g = af^p + b$, we obtain by induction
		\[g^n(z) = a^n f^{np}(z) + c_n \quad \text{for } n \in \mathbb{N}, \] where $c_n= b \sum_{k=0}^{n-1}a^k$. As $|a|=1$ and $a\neq 1$ the term $c_n$ is bounded. Hence, the iterates $f^{np}$ and $g^n$ shows similar behavior which implies that $F(f) = F(g)$ and $I(f) = I(g)$. It can be easily seen that for some $n, m \in \mathbb{N}$, every $h \in H$ may be expressed as $h = g^n \circ f^m(z) = a^n f^{np+m}(z) + c_n$. Thus, \[ I(h) = I(f^{\,np+m}) = I(f) \quad \text{for all } h \in H, \] and likewise, \[ F(h) = F(f^{\,np+m}) = F(f) \quad \text{for all } h \in H. \]
		Consequently, we deduce that $I(H) = I(h)$ and $F(H) = F(h)$.
	\end{proof}
%	\begin{remark}
%	On similar lines, it can be easily seen that for an abelian transcendental semigroup $H=[f,g]$ where $g = af^p + b, \, a,b\in \C, p\in \N$ and $a\neq 0,1$. Then for every $h\in H$, we have $BU(H)=BU(h)$ and $K(H)=K(h)$.
%	\end{remark}
The conclusion of the above theorem is still true for non-abelian transcendental semigroup as shown in the given remark.
	\begin{remark}
		Suppose $f$ is periodic of period $c$ and $g=f^q+c$ for some $ q\in \mathbb{N}$. Then $f \circ g \neq g \circ f $. If we consider the semigroup $H=[f,g]$ then, the conclusion of the above theorem holds as shown in \cite{kumar2016dynamics}.
	\end{remark}

We now recall the Eremenko--Lyubich class
	\[
	\mathcal{B}
	=
	\left\{
	f : \mathbb{C} \to \mathbb{C} \ \text{transcendental entire}
	\; : \;
	\mathrm{Sing}(f^{-1})  \text{is bounded}
	\right\},
	\]
	where $\mathrm{Sing}(f^{-1})$ represents the collection of all critical values and asymptotic  values of $f$ along with their associated finite accumulation points. Dynamics is to a large extent controlled by the presence of singular values. Any function $f \in \mathcal{B}$ is said to be of
	bounded type . It is known that class $\mathcal{B}$ is closed
	under composition that is, if $f$ and $g$ are of bounded type, then their composition $f \circ g$ is also of bounded type  \cite{bergweiler1998dynamics}.

Recall that an entire function $f$ is called postsingularly finite if every singular value of $f$ has a finite forward orbit that is, each singular orbit is pre-periodic. Furthermore, we show that if one of the commuting entire function is postsingularly finite, then so is the other and their compositions.
	%	 Next we extend the notion of postsingular finiteness to the setting of semigroups.  
	%	\begin{definition} A transcendental semigroup $H$ is said to be postsingularly finite if every function $h\in H$ is postsingularly finite.  
		%	\end{definition}
	%	From \cite[ Theorem 1.1]{rempequestion}, it follows that if $f\in \mathcal{B}$ is postsingularly finite, then every component of $I(f)$ is unbounded. In what follows, we demonstrate that this statement can be generalized to a certain class of postsingularly finite semigroups.
	%\begin{theorem}
	%Suppose $f \in \mathcal{B}$ and $f$ is postsingularly bounded. Let $g = af + b,\quad a,b \in \mathbb{C}, |a|=1$ and let $H = [f,g]$. Then $H$ is postsingularly bounded and all components of $I(H)$ are unbounded.
	%\end{theorem}
	%\begin{proof}
	
	%\end{proof}
	\begin{theorem} \label{th3}
		Suppose $f$ and $g$ are commuting transcendental entire functions, $f\in \mathcal{B}$,
		$g(z) = a f^p(z) + b, \quad a,b \in \mathbb{C}, \, p\in \mathbb{N},  \, and \,\, a \neq 0,1.$
		If $f$ is postsingularly finite, then so are $g$ and $f \circ g$.
	\end{theorem}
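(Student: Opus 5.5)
Set $L(z)=az+b$, so that $g=L\circ f^p$. By Lemma \ref{l2}, $f\circ L=L\circ f$, and hence $f^n\circ L=L\circ f^n$ for all $n$. The strategy is to describe $\mathrm{Sing}(g^{-1})$ and $\mathrm{Sing}((f\circ g)^{-1})$ in terms of the singular orbit of $f$. Finiteness of the postsingular sets then follows from the finiteness of $P(f)$ together with one extra fact: $L$ maps $P(f)$ into itself.

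\textbf{Step 1 (singular values of compositions).} For entire $u,v$ we have the standard inclusion
\[
\mathrm{Sing}((u\circ v)^{-1})\subset \mathrm{Sing}(u^{-1})\cup \overline{u(\mathrm{Sing}(v^{-1}))}.
\]
Applying it repeatedly gives
\[
\mathrm{Sing}((f^p)^{-1})\subset \bigcup_{k=0}^{p-1} f^k(\mathrm{Sing}(f^{-1}))\subset P(f).
\]
Here the closures can be dropped because $P(f)$ is finite and each $f^k$ is continuous. Since $L$ is affine, $\mathrm{Sing}(g^{-1})=L(\mathrm{Sing}((f^p)^{-1}))\subset L(P(f))$. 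In the same way, $f\circ g=L\circ f^{p+1}$, so $\mathrm{Sing}((f\circ g)^{-1})\subset L(P(f))$. In particular $g$ and $f\circ g$ lie in $\mathcal B$, consistent with the closure of $\mathcal B$ under composition.

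\textbf{Step 2 (forward orbits).} By induction (as in the proof of Theorem \ref{Th 1}), $g^n=L^n\circ f^{np}$. Every element of the semigroup, in particular $(f\circ g)^n$, has the form $L^m\circ f^{k}$, using the commutation $f\circ L=L\circ f$. Therefore, for $w=L(v)$ with $v\in P(f)$,
\[
g^n(w)=L^{n+1}\bigl(f^{np}(v)\bigr)\in L^{n+1}(P(f)),
\]
and similarly for $f\circ g$. It follows that
\[
P(g),\;P(f\circ g)\subset \bigcup_{m\ge 1}L^m(P(f)),
\]
and it remains to show that this union is finite.

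\textbf{Step 3 (main obstacle: $L(P(f))\subset P(f)$).} I expect this to be the crux. The plan is to show that $L$ maps $\mathrm{Sing}(f^{-1})$ into itself, because then the commutation gives $L(f^n(s))=f^n(L(s))\in P(f)$ for every singular value $s$.

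To prove $L(\mathrm{Sing}(f^{-1}))\subset \mathrm{Sing}(f^{-1})$, differentiate $f(az+b)=af(z)+b$ to get $a f'(az+b)=af'(z)$. So $z_0$ is critical if and only if $L(z_0)$ is critical, and then $f(L(z_0))=L(f(z_0))$; hence $L$ maps critical values to critical values. If $f\to\zeta$ along a curve $\Gamma\to\infty$, then along $L(\Gamma)$ we have $f(L(z))=L(f(z))\to L(\zeta)$, so $L$ maps asymptotic values to asymptotic values. Limit points are preserved by continuity of $L$.

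Then $L(P(f))\subset P(f)$, so $\bigcup_m L^m(P(f))\subset P(f)$ is finite. Hence $P(g)$ and $P(f\circ g)$ are finite, and in fact both are contained in $P(f)$. Note that $|a|=1$ is not needed here; the argument uses only the invariance provided by Lemma \ref{l2}.
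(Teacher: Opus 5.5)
Your proof is correct, and it takes a genuinely different route from the paper.

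The paper's route:
\begin{itemize}
\item It never shows that $L(z)=az+b$ preserves $\mathrm{Sing}(f^{-1})$. Instead it writes $P(g)$ as the closure of $\bigcup_{n}L^{n+1}\bigl(f^{np}(\mathrm{Sing}((f^p)^{-1}))\bigr)$ and pulls the $n$-dependent affine factor out of the union.
\item This produces the expression $a^rP(f^p)+a^nb+c_n$. The paper justifies it by $|a|=1$ (so $c_n$ is bounded) and by asserting that $a$ is a $p$-th root of unity. The case $f\circ g$ is left to ``similar lines''.
\item Boundedness of $c_n$ does not give finiteness. The $p$-th-root claim already fails for the paper's own example $g=-f$, where $p=1$ and $a=-1$.
\item Made rigorous, this approach needs $L$ to have finite order $q$; then $P(g)\subset\bigcup_{j=1}^{q}L^j(P(f))$ is finite.
\item Finite order does in fact hold, though the paper does not prove it. Moving the fixed point $b/(1-a)$ of $L$ to $0$ gives $\tilde f(aw)=a\tilde f(w)$. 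The Taylor coefficients then satisfy $\alpha_k(a^{k-1}-1)=0$, and a transcendental $\tilde f$ has infinitely many $\alpha_k\neq0$, so $a$ is a root of unity.
\end{itemize}

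Your key step avoids this issue entirely. You prove $L(\mathrm{Sing}(f^{-1}))\subset\mathrm{Sing}(f^{-1})$ from $f'\circ L=f'$ and by transporting asymptotic paths under $L$. This shows that $P(f)$ is closed, forward invariant under both $f$ and $L$, and hence under every $L^m\circ f^k$, and that it contains all their singular values. What your approach buys:
\begin{itemize}
\item The sharper conclusion $P(h)\subset P(f)$ for every $h\in H=[f,g]$, with no use of $|a|=1$.
\item With ``finite'' replaced by ``bounded'', the same argument proves Theorem~\ref{th 2}.
\item It gives the semigroup statements of Section~\ref{sec5} directly, without Lemma~\ref{l 1}. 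That lemma concerns boundedness and does not by itself yield finiteness.
\end{itemize}

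One cosmetic slip: in Step 2 the inclusion should read $P(g)\subset\overline{\bigcup_{m\ge1}L^m(P(f))}$. This is harmless, since by Step 3 the union already lies in the closed set $P(f)$.
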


	\begin{proof}
		Consider the polynomial $P(z)=az+b$. Then $P$ commutes with $f$. Also, we have $g=P\circ f^p$ which implies $g^n=P^n \circ f^{np}$. Moreover, $\mathrm{Sing}(g)^{-1}= a \, \mathrm{Sing}(f^p)^{-1}+b$. Now,
		\begin{align*}
			P(g)
			&=\overline{\bigcup_{n \geq 0}g^n \, \mathrm{Sing}(g^{-1})}\\
			&= \overline{\bigcup_{n \geq 0}P^n\circ f^{np}(a \, \mathrm{Sing}(f^p)^{-1}+b)}\\
			&= \overline{\bigcup_{n \geq 0} f^{np}\circ P^n(a \, \mathrm{Sing}(f^p)^{-1}+b)}\\
			%	&= \overline{\bigcup_{n \geq 0}f^{np} (a^n\, (\mathrm{Sing}(f^p)^{-1})+b)}\\
			&= \overline{\bigcup_{n \geq 0} f^{np} \left(a^{n+1} \mathrm{Sing}(f^p)^{-1} + a^nb +  b \sum_{k=0}^{n-1} a^k\right)}\\ 
			&= \overline{\bigcup_{n \geq 0} a^{n+1} f^{np} \mathrm{Sing}(f^p)^{-1} + a^nb +  b \sum_{k=0}^{n-1} a^k}\\ 
			&=  \overline{\bigcup_{n \geq 0} a^{n+1}f^{np} (\mathrm{Sing}(f^p)^{-1})}+ a^nb + b \sum_{k=0}^{n-1} a^k\\
			&= a^r P(f^p)+ a^nb +  b \sum_{k=0}^{n-1} a^k  , \, \, \, \, 0<r< p,\hspace{.2cm} \text {where a is some $p^{th}$ root of unity.}
		\end{align*} 
		%	 \begin{align*}
			%	 	P(g)
			%	 	&=\overline{\bigcup_{n \geq 0}g^n \, \mathrm{Sing}(g^{-1})}\\
			%	 	&= \overline{\bigcup_{n \geq 0}P^n\circ f^{np}(a \, \mathrm{Sing}(f^p)^{-1}+b)}\\
			%	 	&= \overline{\bigcup_{n \geq 0} f^{np}\circ P^n(a \, \mathrm{Sing}(f^p)^{-1}+b)}\\
			%	 	%	&= \overline{\bigcup_{n \geq 0}f^{np} (a^n\, (\mathrm{Sing}(f^p)^{-1})+b)}\\
			%	 	&= \overline{\bigcup_{n \geq 0} a^n\, f^{np} (a \, \mathrm{Sing}(f^p)^{-1}+b)+ b \sum_{k=0}^{n-1} a^k}\\
			%	 	&=  \overline{\bigcup_{n \geq 0} a^{n+1} f^{np} (\mathrm{Sing}(f^p)^{-1})}+ a^nb + b \sum_{k=0}^{n-1} a^k\\
			%	 	&= a^r P(f^p) + a^nb+ b \sum_{k=0}^{n-1} a^k, \, \, \, \text{ for all} \, 0<r< p,\hspace{.2cm} \text {where a is some $p^{th}$ root of unity}
			%	 \end{align*}
		As $f$ is postsingularly finite and the term $ b \sum_{k=0}^{n-1}a^k$ is bounded since, $|a|=1$ and $a\neq 1$, it follows that $P(g)$ is postsingularly finite. On similar lines, it can be seen that $f \circ g$ is also postsingularly finite. Hence, the result.
	\end{proof}
	Recall that an entire function $f$ is called \emph{postsingularly bounded} if its
	postsingular set $P(f)$ is bounded \cite{kumar2014escaping}.
% Motivated by this notion, we extend
%	the definition to transcendental semigroups.
%	
%	\begin{definition}
%		A transcendental semigroup $H$ is said to be \emph{postsingularly
%			bounded} if every element $h \in H$ is postsingularly bounded.
%	\end{definition}
%	
%	It is shown in \cite{rempequestion} that if $f$ is an entire function of bounded
%	type whose singular values have bounded forward orbits (equivalently,
%	$f$ is postsingularly bounded), then each connected component of the
%	escaping set $I(f)$ is unbounded. This result gives a partial resolution
%	of a conjecture posed by Eremenko \cite{eremenko1989iteration}. In the present work, we
%	demonstrate that an analogous conclusion holds for a certain class of
%	postsingularly bounded transcendental semigroups.
	 We now recall the following result which states that postsingularly bounded entire functions are closed under self compositions.
	
	\begin{lemma} \label{l 1}
		Let $f \in \mathcal{B}$ be postsingularly bounded. Then the iterate
		$f^{k}$ is postsingularly bounded for every $k \in \mathbb{N}$ \cite[Lemma 3.7]{kumar2016dynamics}.
	\end{lemma}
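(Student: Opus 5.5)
The plan is to show directly that $P(f^k)$ is contained in a bounded set built from $P(f)$. The key input is the chain-rule description of the singular values of a composition. For transcendental entire functions $u,v$ one has
\[
\mathrm{Sing}\big((u\circ v)^{-1}\big)\subset \overline{\mathrm{Sing}(u^{-1})\cup u\big(\mathrm{Sing}(v^{-1})\big)}.
\]
This holds because a point $w$ can fail to have a neighbourhood over which $u\circ v$ is an unramified covering only if one of two things happens: $w$ is singular for $u$, or $w$ is the image under $u$ of a point that is singular for $v$. Applying this inductively to $f^k=f\circ f^{k-1}$ gives
\[
\mathrm{Sing}\big((f^k)^{-1}\big)\subset \overline{\bigcup_{j=0}^{k-1} f^j\big(\mathrm{Sing}(f^{-1})\big)}\subset P(f).
\]
As a side remark, since $f\in\mathcal{B}$ this also shows that $f^k\in\mathcal{B}$, consistent with the closure of $\mathcal{B}$ under composition cited from \cite{bergweiler1998dynamics}.

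Next I will use forward invariance of the postsingular set. From the definition, $f\big(\bigcup_{n\ge0} f^n(\mathrm{Sing}(f^{-1}))\big)\subset \bigcup_{n\ge0} f^n(\mathrm{Sing}(f^{-1}))$. By continuity of $f$, this gives $f(P(f))\subset P(f)$. Hence $(f^k)^n\big(\mathrm{Sing}((f^k)^{-1})\big)\subset f^{kn}(P(f))\subset P(f)$ for every $n\ge 0$. Taking the union over $n$ and then the closure yields $P(f^k)\subset P(f)$, because $P(f)$ is closed. Since $P(f)$ is bounded by hypothesis, $P(f^k)$ is bounded, which is the claim.

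The main obstacle is the first step: justifying the singular-value inclusion for compositions of transcendental functions, specifically the asymptotic values. Critical values are easy: $(u\circ v)'(z_0)=0$ forces either $v'(z_0)=0$ or $u'(v(z_0))=0$. For an asymptotic path $\Gamma$ of $u\circ v$ there are two cases. If $v(\Gamma)$ is unbounded, I would extract a curve to $\infty$ along which $u$ tends to the same value, so the value is asymptotic for $u$. If $v(\Gamma)$ is bounded, $v$ accumulates at some finite point along $\Gamma$; then I would use the standard fact that an isolated point of $\mathrm{Sing}$ is obstructed only by a logarithmic or algebraic singularity, together with a covering-space argument. That argument goes as follows: off the set $\overline{\mathrm{Sing}(u^{-1})\cup u(\mathrm{Sing}(v^{-1}))}$, both $v$ and $u$ restrict to coverings over suitable preimage domains, so $u\circ v$ is a covering there. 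Rather than re-deriving this, I would cite it as the standard lemma (e.g.\ as used in \cite{bergweiler1998dynamics} and \cite{kumar2016dynamics}). Once it is in hand, the remaining steps are routine.
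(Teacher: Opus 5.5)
Your proof is correct. The paper gives no argument for this lemma; it only imports it as Lemma~3.7 of \cite{kumar2016dynamics}. So there is no in-text proof to compare against, and what you supply is the standard self-contained proof of the cited fact.

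Your route has two steps. First, iterate the composition inclusion to get $\mathrm{Sing}((f^k)^{-1})\subset P(f)$. Second, use forward invariance $f(P(f))\subset P(f)$. This actually yields the sharper statement $P(f^k)\subset P(f)$. It also shows that the hypothesis $f\in\mathcal{B}$ is superfluous, since $\mathrm{Sing}(f^{-1})\subset P(f)$ already forces it once $P(f)$ is bounded.

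The one place you were vague is the asymptotic-value case of the composition inclusion, and it is simpler than your sketch suggests. Suppose $u\circ v\to w$ along $\gamma$.

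\begin{itemize}
\item If $v\circ\gamma$ stays bounded, its limit set is a nonempty continuum on which $u\equiv w$. Since the zeros of $u-w$ are isolated, this continuum is a single point $z_1$. Then $v\to z_1$ along $\gamma$, so $z_1$ is an asymptotic value of $v$ and $w=u(z_1)$. No appeal to logarithmic or algebraic singularities is needed.
\item If $v\circ\gamma$ is unbounded, it need not tend to $\infty$, so you cannot use it directly as an asymptotic path. The usual fix is as follows. The tails of $v\circ\gamma$ lie in a nested sequence of unbounded components of $u^{-1}(D(w,\varepsilon_n))$. These components must eventually leave every compact set, because a common accumulation point $z^*$ would satisfy $u(z^*)=w$, and small components near $z^*$ are trapped in a small disc around it. Joining points of successive components then gives an asymptotic path of $u$ for $w$.
\end{itemize}
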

	%Before proving the next result, we first state the following lemma, which clarifies the connection between $f$, $g$, and the affine function $az+b$.
	%\begin{lemma}\label{l2}
	%Suppose $f$ and $g$ are two commuting transcendental entire functions where $g=af^p+b,\, a,b\in \mathbb{C}$, $p\in \mathbb{N}$, and $|a|=1$. Then, 
	%\[
	%f(az+b)=af(z)+b.
	%\]
	%\end{lemma}
	%
	%\begin{proof}
	%We have $f\circ g(w)=g\circ f(w)$.
	%As $g(w)=af^p(w)+b$, we obtain
	%\begin{align*}
	%f\circ g(w)
	%&= f(af^p(w)+b).
	%\end{align*}
	%On the other hand, we have $g\circ f(w)=af^p(f(w))+b$. Setting $f^p(w)=z$, this leads to the following functional equation $f(az+b)=af(z)+b$.
	%\end{proof}
	
	In the following result, we examine a family of commuting transcendental entire functions. Furthermore, we show that if one of the commuting entire function is postsingularly bounded, then so is the other and their compositions.
	\begin{theorem} \label{th 2}
		Suppose $f$ and $g$ are commuting transcendental entire functions, where
		\[
		g(z) = a f^p(z) + b, \quad a,b \in \mathbb{C}, \, p\in \mathbb{N},  \, and \,\, a \neq 0,1.
		\]
	Let $f\in \mathcal{B}$.	If $f$ is postsingularly bounded, then so is $g$ and $f \circ g$.
	\end{theorem}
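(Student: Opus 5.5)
The plan is to reduce everything to $f$ and its iterates, using Lemma~\ref{l2} (so $f\circ P=P\circ f$ with $P(z)=az+b$) and Lemma~\ref{l 1}. I assume throughout, as the paper does in the introduction and in the proof of Theorem~\ref{Th 1}, that $|a|=1$. Then the affine maps $P^n(z)=a^nz+c_n$, with $c_n=b\sum_{k=0}^{n-1}a^k$, are Euclidean isometries (up to translation), and the translations $c_n$ are uniformly bounded.

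\textbf{Step 1: singular values of $g$.} Since $P$ is a conformal automorphism of $\C$, the singular values satisfy
\[
\mathrm{Sing}(g^{-1})=P\bigl(\mathrm{Sing}((f^p)^{-1})\bigr).
\]
For compositions of bounded-type maps, $\mathrm{Sing}((f^p)^{-1})\subset\bigcup_{j=0}^{p-1} f^{j}(\mathrm{Sing}(f^{-1}))$ (together with its closure). Hence $\mathrm{Sing}((f^p)^{-1})\subset P(f)$, which is bounded. In particular $g\in\mathcal B$, and $\mathrm{Sing}(g^{-1})$ lies in the bounded set $P(P(f))$.

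\textbf{Step 2: the orbit under $g$.} Iterating Lemma~\ref{l2} gives $f^{m}\circ P^n=P^n\circ f^{m}$. From $g=P\circ f^p$ we get $g^n=P^n\circ f^{np}$. For $s\in\mathrm{Sing}((f^p)^{-1})$ this yields
\[
g^n(P(s))=P^{n}\bigl(f^{np}(P(s))\bigr)=P^{n+1}\bigl(f^{np}(s)\bigr).
\]
Since $f^{np}(s)\in P(f)$, every point of $g^n(\mathrm{Sing}(g^{-1}))$ lies in
\[
\bigcup_{n\ge 0}P^{n+1}(P(f))\subset\{\,a^{n+1}w+c_{n+1}: w\in P(f),\ n\ge0\,\}.
\]
With $|a|=1$ we have $|a^{n+1}w|\le\sup_{P(f)}|w|$ and $\sup_n|c_n|\le 2|b|/|1-a|<\infty$. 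So this union is bounded, and so is its closure $P(g)$. Lemma~\ref{l 1} is used for the equivalent statement that $f^p$ is postsingularly bounded, which is exactly what $P(f^p)\subset P(f)$ encodes.

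\textbf{Step 3: $f\circ g$.} Here $f\circ g=f\circ P\circ f^p=P\circ f^{p+1}$, so the argument of Steps 1–2 applies verbatim with $p$ replaced by $p+1$. Alternatively, $f\circ g$ is $g$ with the exponent shifted, and $(f\circ g)^n=P^n\circ f^{n(p+1)}$. Its singular values lie in $P(P(f))$ and its singular orbits lie in $\bigcup_n P^{n}(P(f))$, which is bounded for the same reason.

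\textbf{Main obstacle.} The delicate point is Step 1: justifying $\mathrm{Sing}((f^p)^{-1})\subset P(f)$. Asymptotic values of a composition can arise as images of asymptotic values of inner factors, and limit points also enter. I would invoke the standard inclusion $\mathrm{Sing}((h_1\circ h_2)^{-1})\subset \mathrm{Sing}(h_1^{-1})\cup h_1(\mathrm{Sing}(h_2^{-1}))$, together with closedness of $P(f)$, as used in \cite{bergweiler1998dynamics} and behind Lemma~\ref{l 1}. The other essential point is that the hypothesis $|a|=1$ is what keeps $P^n(P(f))$ bounded; without it the conclusion can fail.
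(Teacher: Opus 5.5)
Your proposal is correct and is essentially the paper's argument. You write $g^n=P^n\circ f^{np}$ and commute $P$ past $f^{np}$ via Lemma~\ref{l2} to get $g^n(\mathrm{Sing}(g^{-1}))=P^{n+1}\bigl(f^{np}(\mathrm{Sing}((f^p)^{-1}))\bigr)$, which is exactly the paper's $a^{n+1}f^{np}\,\mathrm{Sing}(f^p)^{-1}+a^nb+c_n$. You then conclude from $|a|=1$, and treat $f\circ g=P\circ f^{p+1}$ the same way. There are two minor differences. You bound by $P(f)$ through the composition inclusion for singular values, whereas the paper bounds by $P(f^p)$ through Lemma~\ref{l 1}. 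You also keep the union over $n$ explicit, which is cleaner than the paper's final display.

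One side remark should be corrected: $|a|=1$ is not an extra hypothesis whose absence could break the result. Put $z_0=b/(1-a)$ and $F(w)=f(z_0+w)-z_0$; Lemma~\ref{l2} then gives $F(aw)=aF(w)$. So each nonzero Taylor coefficient $\alpha_k$ of $F$ forces $a^{k-1}=1$, and transcendence of $F$ makes $a$ a root of unity. In particular $P$ even has finite order.
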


	\begin{proof} 
		The postsingular set of $f^p$ is given by 
		%\[
		%P \circ f(z) = a f(z) + b=g(z).
		%\]
		%It is easy to verify that $g^n = P^n \circ f^n$.

		\[
		P(f^p) = \overline {\bigcup_{n \geq 0} f^{np} \mathrm{Sing}(f^p)^{-1}  }.
		\]
		
		As $g(z) = af^p(z) + b$, we have $g^n(z)=a^nf^{np}(z)+ c_n$ where $c_n= b \sum_{k=0}^{n-1} a^k$.\\
		Also,
		\[
		\mathrm{Sing}(g^{-1}) = \mathrm{Sing} (af^p + b)^{-1} 
		= a \, \mathrm{Sing}(f^p)^{-1} + b.
		\]
		It follows that $g\in \mathcal{B}$.
		Thus,
		\[
		P(g) = \overline {\bigcup_{n \geq 0} g^n \left( \mathrm{Sing}(g^{-1}) \right)}
		= \overline {\bigcup_{n \geq 0} g^n \left( a \, \mathrm{Sing}(f^p)^{-1} + b \right)}.
		\]
		
		Now, using Lemma \ref{l2}
		\begin{align*}
			g^n \left( a \, \mathrm{Sing}(f^p)^{-1} + b \right)
			&= (a^nf^{np}(z)+ c_n)\left( a \, \mathrm{Sing}(f^p)^{-1} + b \right) \\
			&=a^nf^{np}\left( a \, \mathrm{Sing}(f^p)^{-1} + b \right)+ c_n \\
			&= a^n\left( a f^{np} \, \mathrm{Sing}(f^p)^{-1} + b \right)+ c_n \\
			%& \dots \hspace{2cm} \dots \hspace  {2cm} \dots  \\
			%& \dots \hspace{2cm} \dots \hspace  {2cm} \dots \\
			&=a^{n+1} f^{np} \, \mathrm{Sing}(f^p)^{-1} + a^nb+ c_n.
			%&= a^n \, f^n(\mathrm{Sing}(f^{-1})) + c.
		\end{align*}
		Therefore,
		\begin{align*}
			P(g)
			&= \overline {\bigcup_{n \geq 0} g^n \left( a \, \mathrm{Sing}(f^p)^{-1} + b \right)}\\
			&= \overline {\bigcup_{n \geq 0} a^{n+1} f^{np} \, \mathrm{Sing}(f^p)^{-1} + a^nb+ c_n} \\
			&= a^{r} \overline {\bigcup_{n \geq 0} f^{np} \, \mathrm{Sing}(f^p)^{-1}} + a^nb+ c_n ,  \,  \, 0<r< p,\hspace{.2cm} \text {where a is some $p^{th}$ root of unity},\\
			&= a^{r} P(f^p) + a^nb +c_n.
			%&= \overline {\bigcup_{n \geq 0} (a^{n+1} f^n +c_n)  \left( a \, \mathrm{Sing}(f^{-1}) + b \right)}\\
			%&= \overline {\bigcup_{n \geq 0}  (a^n f^n\left( a \, \mathrm{Sing}(f^{-1}) + b \right) +c_n)  }  \\
			%&= \overline {\bigcup_{n \geq 0}  (a^{n+1} f^n\left( \mathrm{Sing}(f^{-1}) \right)+b +c_n)  }
			%&=  a^{n+1} \overline {\bigcup_{n \geq 0} \left( f^n \, \mathrm{Sing}(f^{-1})\right) } +c \\
			%&= a^n \,  P(f) +b
		\end{align*}
		As $P(f)$ is bounded, it follows that $P(g)$ is likewise bounded and therefore, $g$ is postsingularly bounded. 
		Moreover, as $f$ and $g$ are commuting, we have from Lemma \refeq{l2}, $f\circ g = a f^{p+1} +b =g\circ f$ 
		%$f^l\circ g^m = f^l\big(a^m f^m + b \sum_{x=0}^{m-1} a^x\big)= a^m f^{m+l} + b \sum_{x=0}^{m-1} a^x = g^{m+l} $
		which is again postsingularly bounded, and hence the result.
	\end{proof}
%	\begin{remark}
%		%It is easy to verify that if $f$ belongs to the class $\mathcal{B}$ and $g(z)=af^p(z)+b$ with $a,b \in \mathbb{C},\, p\in \mathbb{N}$ and $|a|\neq 0$, where $f$ and $g$ are commuting, then $g$ also belongs to the class $\mathcal{B}$.\\
%		It can be easily verified that if $f$ and $g$ are commuting, $f$ belongs to class $\mathcal{B}$ and $g=af^p(z)+b$ with $a,b \in \mathbb{C}, \, p\in \mathbb{N}$ and $a \neq 0,1$, then $g$ also belongs to class $\mathcal{B}.$
%	\end{remark}
	The following example gives an illustration of Theorem \refeq{th 2}.
	\begin{example}
		Suppose $f=\lambda \sin z, \, \lambda\in\mathbb{R}\setminus\{0\}$. As $\mathrm{Sing}(f^{-1})= \{\pm \lambda\}$, so $f\in \mathcal{B}$. Then  $g=-f$ commutes with $f$. Also, we know that $\sin z$ is bounded on the real line. This together with $\lambda\in\mathbb{R}\setminus\{0\}$ implies that $f$ is postsingularly bounded. We now show that $g$ and $f\circ g$ are also postsingularly bounded.
		%As $\sin z$ is bounded on $A\subset\mathbb{C}$ for real $\lambda$, this implies $f$ is postsingularly bounded and so $g$ and $f\circ g$ must also be bounded.
		%  then for any $M>0$, we have $ P(f) = \overline {\bigcup_{n \geq 0} f^{n} \mathrm{Sing}(f)^{-1}  } \leq M$.
		We have $\mathrm{Sing}(g^{-1})= -\mathrm{Sing}(f^{-1})$. Now,
		\begin{align*}
			P(g)
			&= \overline {\bigcup_{n \geq 0} g^n \left( \mathrm{Sing}(g^{-1}) \right)}\\
			&= \overline {\bigcup_{n \geq 0} g^n \left( \mathrm{-Sing}(f^{-1}) \right)}\\
			&= \overline {\bigcup_{n \geq 0} (-1)^{n+1} f^{n} \left( \mathrm{Sing}(f^{-1}) \right)}\\
			%  	&= \overline {\bigcup_{n \geq 0} (-1)^{n+1} f^{n} \left( \mathrm{Sing}(f^{-1}) \right)}\\
			%  	&= \overline {\bigcup_{n \geq 0} (-1)^{n+1} f^{n} \left( \mathrm{Sing}(f^{-1}) \right)}\\
			%  	&= \overline {\bigcup_{n \geq 0} (-1)^{n+1} f^{n} \left( \mathrm{Sing}(f^{-1}) \right)}\\
			&=	-P(f).
		\end{align*}
		Also, $f\circ g = -f^{2}$ which implies $P(f \circ g)= -P(f)$. As $f$ is postsingularly bounded therefore, $f \circ g$ is also postsingularly bounded.
		%  \begin{align*}
			%  	P(f \circ g)
			%  	&= \overline{\bigcup_{n \geq 0}(f \circ g)^n \,\mathrm{Sing} (f \circ g)^{-1}} \\
			%  	&= \overline{\bigcup_{n \geq 0} (-1)^n f^{2n} ((\mathrm{Sing}(f)^{-1})\cup (f(\mathrm{Sing}(g)^{-1})))}\\
			%  	&= \overline{\bigcup_{n \geq 0} (-1)^n( f^{2n}\mathrm{Sing}(f)^{-1}\cup f^{2n+1} \mathrm{Sing}(g)^{-1}) }\\
			%  	&= \overline{\bigcup_{k \geq 0} (-1)^n f^k \mathrm{Sing}(f)^{-1}}\, \bigcup \, \overline{\bigcup_{l \geq 0}(-1)^{n+1} f^l \mathrm{Sing}(f)^{-1}}\\
			%  	&= P(f) \cup P(f)\\
			%  	&= P(f) \cup (-P(g)).
			%  \end{align*}
		% As $f$ and $g$ are postsingularly bounded, so is $f \circ g$ and hence, the result.
	\end{example}

		Recall that an entire function $f$ is called hyperbolic if its postsingular set $P(f)$ is a compact subset of the Fatou set $F(f)$. This requirement guarantees that the singular values of $f$ display stable dynamical behavior.
%	We now extend this concept to semigroups of transcendental entire function.
%	
%	\begin{definition}
%		A transcendental semigroup $H$ is said to be \emph{hyperbolic} if every function $h \in H$ is hyperbolic.
%	\end{definition}
%	
%	By \cite[Theorem~1.1]{rempequestion}, if $f \in \mathcal{B}$ is hyperbolic, then every connected components of the escaping set $I(f)$ are unbounded. Our goal is to generalize this statement to a suitable class of hyperbolic semigroups.
		\begin{remark}
		For two commuting transcendental entire functions $f$ and $g$, $f\in \mathcal{B}$, where $g$ has the form $af^p+b, a,b \in \C,¹ p\in \N$ and $a\neq 0,1$, arguing on similar lines, we conclude that if $f$ is hyperbolic, then so are $g$ and $f \circ g$.
		\end{remark}
	\section{Topological investigation  of abelian transcendental  semigroup and Eremenko's conjecture}\label{sec5}
		In this section, we investigate the postsingular set of an abelian transcendental semigroup. Also, we have obtained a partial result towards Eremenko's conjecture.
	As shown in \cite{schleicher2003escaping} if $f$ is an exponential map of the form
	\[
	f(z) = \lambda e^{z}, \quad \lambda \in \mathbb{C} \setminus \{0\},
	\]

then every connected component of the escaping set $I(f)$ is unbounded. That is, Eremenko’s conjecture \cite{eremenko1989iteration} is valid for exponential maps. Using the notion of postsingular set of a transcendental entire function, 
% we now extend this concept to transcendental semigroups.
	we now extend the notion of postsingular finiteness to the setting of semigroups.  
\begin{definition} A transcendental semigroup $H$ is said to be postsingularly finite if every function $h\in H$ is postsingularly finite.  
\end{definition}
From \cite[ Theorem 1.1]{rempequestion}, it follows that if $f\in \mathcal{B}$ is postsingularly finite, then every component of $I(f)$ is unbounded. In what follows, we demonstrate that this statement can be generalized to a certain class of postsingularly finite semigroups.
We now generalize Theorem \refeq{th3} to a transcendental semigroup and show that Eremenko's conjecture also holds in this case.
\begin{theorem}
	%Suppose $f \in \mathcal{B}$ and be postsingularly finite. Let $g = af^p + b,\quad a,b \in \mathbb{C},\,  p \in \mathbb{N} \,  \, and \,\, a \neq \{0,1\}$ commutes with $f$. Let $H = [f,g]$ be abelian transcendental semigroup then $H$ is also postsingularly finite and all components of $I(H)$ are unbounded.\\
	Suppose $f \in \mathcal{B}$ is postsingularly finite. Let $g = af^p + b,\, a,b \in \mathbb{C},\, p \in \mathbb{N} \, \, and \,\, a \neq 0,1$ commutes with $f$. If $H = [f,g]$, then $H$ is also postsingularly finite and all components of $I(H)$ are unbounded. 
\end{theorem}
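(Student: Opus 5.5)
The plan is to reduce everything to statements about a single function, using the normal form of elements of $H$ obtained in the proof of Theorem \ref{Th 1}. Write $c_n=b\sum_{k=0}^{n-1}a^k$ and $P(z)=az+b$. By Lemma \ref{l2}, $f$ commutes with $P$. Hence every $h\in H$ can be written as
\[
h=g^n\circ f^m = P^n\circ f^{np+m}=a^nf^{np+m}+c_n
\]
for some $n,m\ge 0$, not both zero. So it suffices to show two things: that every function of the form $P^n\circ f^k$ with $k\ge 1$ is postsingularly finite, and that $I(H)$ coincides with the escaping set of one such function.

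\textbf{Step 1: iterates of $f$.} First I would show that $f^k$ is postsingularly finite for every $k$. Singular values of a composition lie in $\mathrm{Sing}(f^{-1})\cup f(\mathrm{Sing}(f^{-1}))\cup\dots\cup f^{k-1}(\mathrm{Sing}(f^{-1}))$, since $f\in\mathcal B$ and $\mathcal B$ is closed under composition. Consequently $\mathrm{Sing}((f^k)^{-1})\subset P(f)$. Because $P(f)$ is forward invariant under $f$, we get $P(f^k)\subset P(f)$, which is finite. This is the finite analogue of Lemma \ref{l 1}.

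\textbf{Step 2: general elements.} Next take $h=P^n\circ f^k$. Since $P$ is a conformal automorphism, $\mathrm{Sing}(h^{-1})=P^n(\mathrm{Sing}((f^k)^{-1}))\subset P^n(P(f))$. Using $P\circ f=f\circ P$, we have $h^j=P^{nj}\circ f^{kj}$. Therefore
\[
h^j(P^n(s))=P^{n(j+1)}(f^{kj}(s)), \qquad s\in P(f).
\]
Hence $P(h)\subset\bigcup_{l\ge 0}P^l(P(f))$.

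The main obstacle is to show that this union is finite. This holds if $a$ is a root of unity, since then $P$ has finite order (it is a rotation about its fixed point $b/(1-a)$) and the union is finite. If $a$ is not a root of unity, I would argue from the functional equation $f\circ P=P\circ f$. The set $\mathrm{Sing}(f^{-1})$ is invariant under $P$: if $v$ is a critical or asymptotic value of $f$, then $P(v)$ is one too, because $f(P(z))=P(f(z))$ carries critical points and asymptotic curves over. Applying $f$ and taking closures, $P(f)$ is itself $P$-invariant, so $\bigcup_l P^l(P(f))=P(f)$ is finite. This invariance argument works for every $a\neq 0$ and avoids the root-of-unity issue altogether, so I would use it as the main route. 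It gives $P(h)\subset P(f)$, so every $h\in H$ is postsingularly finite, and hence $H$ is postsingularly finite.

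\textbf{Step 3: escaping set.} By Theorem \ref{Th 1}, $I(H)=I(f)$. Since $f\in\mathcal B$ is postsingularly finite, \cite[Theorem 1.1]{rempequestion} shows that every component of $I(f)$ is unbounded. Hence every component of $I(H)$ is unbounded, which completes the proof.
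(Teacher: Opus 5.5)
Your argument is correct. For the escaping-set half you follow the paper exactly: Theorem \ref{Th 1} gives $I(H)=I(f)$, and then \cite[Theorem 1.1]{rempequestion} applies. For the postsingular-finiteness half you take a different route. The paper quotes Theorem \ref{th3} for $g$ and $f\circ g$, writes $h=a^nf^{np+m}+c_n$, and then invokes Lemma \ref{l 1}. That is shorter, but Lemma \ref{l 1} concerns boundedness for iterates of a single map, so the passage to an arbitrary $h$, and to finiteness, is only asserted. The displayed computation in Theorem \ref{th3} also leaves a free index $n$ outside the union. You instead prove the finite analogue of Lemma \ref{l 1} directly, $P(f^k)\subset P(f)$. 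You then show that $P(f)$ is forward invariant under $z\mapsto az+b$, because $f\circ P=P\circ f$ transports critical points and asymptotic paths. This gives the explicit inclusion $P(h)\subset P(f)$ for every $h\in H$, with no appeal to Theorem \ref{th3}. The inclusion is stronger than what the paper states. It also yields the postsingularly bounded and hyperbolic versions immediately, since $P(h)$ is a closed subset of $P(f)$ and $F(h)=F(f)$.

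Two minor points. First, the inclusion $\mathrm{Sing}((f^k)^{-1})\subset\bigcup_{j<k}f^j(\mathrm{Sing}(f^{-1}))$ does not follow from $\mathcal B$ being closed under composition. It comes from the standard lemma $\mathrm{Sing}((\phi\circ\psi)^{-1})\subset\mathrm{Sing}(\phi^{-1})\cup\overline{\phi(\mathrm{Sing}(\psi^{-1}))}$, where the closures are harmless because $\mathrm{Sing}(f^{-1})$ is finite. Second, your case split is unnecessary. With $z_0=b/(1-a)$, the Taylor coefficients $\alpha_k$ of $f(z_0+w)-z_0$ satisfy $\alpha_k(a^k-a)=0$, so transcendence of $f$ forces $a$ to be a root of unity. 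This is worth recording, because Theorem \ref{Th 1}, which your Step 3 relies on, uses $|a|=1$ in its proof, and that hypothesis is absent from the statement.
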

\begin{proof}
	By using Theorem \refeq{th3}, $g$ and $f\circ g$ are both postsingularly finite since $f$ is so. Any $h\in H$ can be expressed using Theorem \refeq{Th 1} as \[h = g^n \circ f^m(z) = a^n f^{np+m}(z) + c_n\] for some $n, p, m \in \mathbb{N}$ and $c_n= b \sum_{k=0}^{n-1} a^k$. For any $h\in H$, we obtain $I(h) = I(f^{np+m}) = I(f)$. By virtue of Lemma~\refeq{l 1}, the function $h$ is postsingularly finite. As $h$ is any arbitrary element of the semigroup $H$, we have $H$ is also postsingularly finite. Every component of $I(h)$ is unbounded using \cite[Theorem 1.1]{rempequestion}. Consequently, every component of $I(H)$ is unbounded. 
\end{proof}
\begin{definition}
	A transcendental semigroup $H$ is called \emph{postsingularly
		bounded} if each element $h \in H$ is postsingularly bounded.
\end{definition}

It was proved in \cite{rempequestion} that if $f$ is an entire function of bounded type whose singular values have bounded forward orbits (that is, if $f$ is postsingularly bounded), then every connected component of the escaping set $I(f)$ is unbounded. This yields a partial answer to a conjecture of Eremenko \cite{eremenko1989iteration}. We now show that a corresponding statement also holds for a certain class of postsingularly bounded transcendental semigroups. The following result gives a generalization of Theorem \refeq{th 2} for a particular class of transcendental semigroups.
\begin{theorem}
		Suppose $f \in \mathcal{B}$ is postsingularly bounded, and $g = af^p + b \,,a,b\in \mathbb{C}\,, \, p \in \mathbb{N} \, \, and \, \, a \neq 0,1$ be permutable with $f$. Let $H = [f,g]$, then $H$ is postsingularly bounded as well, and every component of $I(H)$ is unbounded.
	\end{theorem}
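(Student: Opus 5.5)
The argument follows the proof of the preceding theorem for the postsingularly finite case, with Theorem \ref{th 2} in place of Theorem \ref{th3}.

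\emph{Step 1: the shape of a general element.} By Theorem \ref{Th 1} and Lemma \ref{l2}, any $h\in H$ can be written as
\[
h = g^n\circ f^m = a^n f^{np+m} + c_n, \qquad c_n = b\sum_{k=0}^{n-1}a^k ,
\]
for some $n,m\ge 0$ (not both zero). Thus $h = Q\circ f^k$, where $k=np+m$ and $Q(z)=a^n z + c_n$ is affine.

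\emph{Step 2: the postsingular set of $h$.} By Lemma \ref{l 1}, $f^k$ is postsingularly bounded. We also have $f^k\in\mathcal{B}$, since $\mathcal{B}$ is closed under composition. Because $Q$ is affine,
\[
\mathrm{Sing}(h^{-1}) = Q\bigl(\mathrm{Sing}((f^k)^{-1})\bigr),
\]
so $h\in\mathcal{B}$. Lemma \ref{l2} gives $f\circ P = P\circ f$ with $P(z)=az+b$. Since $Q=P^n$, it follows that $Q$ commutes with $f$, and hence with $f^k$. Therefore, for every $j$,
\[
h^j = Q^j\circ f^{kj}, \qquad h^j\circ Q = Q^{j+1}\circ f^{kj}.
\]
Consequently,
\[
P(h) \subset \overline{\bigcup_{j\ge 0} Q^{j+1}\bigl(P(f^k)\bigr)}.
\]
This is the same computation as in the proof of Theorem \ref{th 2}, now carried out for a general element instead of $g$ alone.

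\emph{Step 3: boundedness, the main obstacle.} We must bound $\bigcup_j Q^{j+1}(P(f^k))$. The iterates are
\[
Q^{j}(z) = a^{nj} z + c_n\sum_{i<j} a^{ni}.
\]
These are uniformly bounded on bounded sets exactly when $|a|=1$ and $a^n\ne 1$, or when $Q$ is the identity. This is where I would use the standing assumption $|a|=1$ stated in the introduction.

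If $|a|\ne 1$, I would instead argue through the fixed point $z_0=b/(1-a)$ of $P$. Since $f$ commutes with $P$, $f$ maps $z_0$ to a fixed point of $P$, so $f(z_0)=z_0$. Conjugating by a translation, one may take $b=0$, and then $f(az)=af(z)$ forces growth restrictions that make the problem degenerate. I expect this case to need care, and I would restrict to $|a|=1$ as the paper does.

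Assuming $|a|=1$ and $a\ne1$: the sums $c_n$ are bounded, and each $Q^j$ is an isometry composed with a bounded translation. Hence $P(h)$ lies in a bounded neighbourhood of $P(f^k)$, so $P(h)$ is bounded. Since $h$ was arbitrary, $H$ is postsingularly bounded.

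\emph{Step 4: the escaping set.} For each $h\in H$, we have $h\in\mathcal{B}$ and $h$ is postsingularly bounded. By \cite[Theorem 1.1]{rempequestion}, every component of $I(h)$ is unbounded. Finally, Theorem \ref{Th 1} gives $I(H)=I(h)$, so every component of $I(H)$ is unbounded.
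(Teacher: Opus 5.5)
Your proof is correct and follows the paper's route: the normal form $h=a^nf^{np+m}+c_n$, boundedness of $P(h)$, Rempe's theorem applied to each $h$, and $I(H)=I(h)$ from Theorem~\ref{Th 1}. Your inclusion $P(h)\subset\overline{\bigcup_{j\ge 0}Q^{j+1}(P(f^k))}$ in fact supplies the justification the paper compresses into an appeal to Lemma~\ref{l 1}, which by itself only covers the iterates $f^k$ and not $a^nf^k+c_n$.

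The case $|a|\neq 1$ that you set aside is actually vacuous. Conjugating $z\mapsto az+b$ to $w\mapsto aw$ by a translation turns Lemma~\ref{l2} into $\tilde f(aw)=a\tilde f(w)$, so the Taylor coefficients satisfy $d_j(a^j-a)=0$; since infinitely many $d_j$ are nonzero, $a^{j-1}=1$ for some $j\ge 2$, so $a$ is a root of unity and $|a|=1$ holds automatically.
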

	\begin{proof}
		As $f$ is postsingularly bounded, by Theorem \refeq{th 2}, $g$ and $f\circ g$ are also postsingularly bounded. Using Theorem \refeq{Th 1}, any $h\in H$ can be written as
		\[
		h = g^n \circ f^m(z) = a^n f^{np+m}(z) + c_n
		\]
		for some $n, p, m \in \mathbb{N}$ and $c_n= b \sum_{k=0}^{n-1} a^k$. As $|a|=1$ and $a\neq 1$, the term $c_n$ is bounded. We have $I(h) = I(f^{np+m}) = I(f)$ for all $h\in H$. Using Lemma \refeq{l 1}, $h$ is postsingularly bounded. As $h$ is any arbitrary element of the semigroup $H$ therefore, $H$ is also postsingularly bounded. Using \cite[Theorem 1.1]{rempequestion}, every component of $I(h)$ is unbounded, and consequently, every component of $I(H)$ is also unbounded.
	\end{proof}

		We now give a notion of hyperbolic semigroup.
	
	\begin{definition}
		A transcendental semigroup $H$ is said to be \emph{hyperbolic} if every function $h \in H$ is hyperbolic.
	\end{definition}
	
	By \cite[Theorem~1.1]{rempequestion}, if $f \in \mathcal{B}$ is hyperbolic, then every connected component of the escaping set $I(f)$ is unbounded. This can be generalized to a suitable class of hyperbolic semigroup as shown in the remark given below.
\begin{remark}
	Suppose $f \in \mathcal{B}$ is hyperbolic. Let $g = af^p + b,\, a,b \in \mathbb{C},\, p \in \mathbb{N} \, \, and \,\, a \neq 0,1$ commutes with $f$. If $H = [f,g]$, then $H$ is also hyperbolic and all components of $I(H)$ are unbounded.
\end{remark}
\end{document}